\newtheorem{thm}{Theorem}[section]
\newtheorem{lemma}[thm]{Lemma}
\newtheorem{prop}[thm]{Proposition}
\theoremstyle{definition}
\newtheorem{definition}[thm]{Definition}
\DeclareMathOperator{\Ln}{L}
\DeclareMathOperator{\Uni}{U}
\DeclareMathOperator{\GL}{GL}
\DeclareMathOperator{\SL}{SL}
\DeclareMathOperator{\AGL}{AGL}
\DeclareMathOperator{\PSL}{PSL}
\DeclareMathOperator{\PSU}{PSU}
\DeclareMathOperator{\Gal}{Gal}
\DeclareMathOperator{\Cyc}{C}
\DeclareMathOperator{\Sym}{S}
\DeclareMathOperator{\Stab}{Stab}
\DeclareMathOperator{\Alt}{A}
\DeclareMathOperator{\Aut}{Aut}
\DeclareMathOperator{\Nor}{N}
\DeclareMathOperator{\trs}{tr}
\DeclareMathOperator{\rad}{rad}
\DeclareMathOperator{\car}{char}
\DeclareMathOperator{\im}{im}
\DeclareMathOperator{\tr}{tr}
\DeclareMathOperator{\Hom}{Hom}
\newcommand{\ol}[1]{\overline{#1}}
\newcommand{\C}{{\mathbb{C}}}
\newcommand{\F}{{\mathbb{F}}}
\newcommand{\mcC}{{\mathcal{C}}}
\newcommand{\tri}{\trianglelefteq}
\newcommand{\bgm}{\left(\begin{matrix}}
\newcommand{\enm}{\end{matrix}\right)}
\title{Determining Aschbacher classes using characters}
\author{Sebastian Jambor}
\date{}
\begin{document}
\maketitle

\begin{abstract}
\noindent
\textbf{Abstract.} Let $\Delta\colon G \to \GL(n, K)$ be an absolutely irreducible representation
of an arbitrary group~$G$ over an arbitrary field~$K$;
let $\chi\colon G \to K \colon g \mapsto \tr(\Delta(g))$ be its character.
In this paper, we assume knowledge of $\chi$ only, and study which properties of $\Delta$ can be inferred.
We prove criteria to decide whether $\Delta$ preserves a form, is realizable over a subfield, or
acts imprimitively on~$K^{n \times 1}$.
If $K$ is finite, this allows us to decide whether the image of $\Delta$ belongs to certain
Aschbacher classes.

\medskip
\noindent
\textbf{Keywords.} 
Representations of groups, character theory, Aschbacher classification

\medskip
\noindent
\textbf{2010 Mathematics Subject Classification.} 
20C99
\end{abstract}

\let\thefootnote\relax\footnote{The author was supported by the Alexander von Humboldt Foundation via a Feodor Lynen Research Fellowship}

\section{Introduction}
Let $\Delta\colon G \to \GL(n, K)$ be a representation, where $G$ is an arbitrary group (possibly infinite) 
and $K$ is an arbitrary field.
Denote by $\chi = \chi_\Delta\colon G \to K\colon g \mapsto \tr(\Delta(g))$ its character.
In this paper, we assume knowledge of $\chi$ only, 
and study which properties of $\Delta$ can be inferred.
We will restrict to absolutely irreducible representations; in this case,
$\Delta$ is uniquely determined by~$\chi$, up to equivalence.

The results are motivated by Aschbacher's classification of maximal subgroups of $\GL(n, F)$, where
$F$ is a finite field \cite{aschbacher}. According to this classification, every subgroup of $\GL(n, F)$ belongs
to one of nine classes, often denoted $\mcC_1$ up to $\mcC_9$.
For example, a subgroup of $\GL(n, F)$ belongs to class $\mcC_2$ if acts imprimitively
on~$F^{n \times 1}$; it belongs to class $\mcC_5$ if it is definable modulo scalars
over a subfield of $F$.
We give criteria on $\chi$ to decide whether the image $H$ of $\Delta$
belongs to one of the classes $\mcC_2$, $\mcC_5$, or $\mcC_8$.
While motivated by Aschbacher's classification, most of the results are also valid for arbitrary fields.
In most cases, the criterion is of the form that $\chi$ has a non-trivial stabilizer under 
certain actions.

Most of our results are generalizations of results in \cite{l2q},
where, Plesken and Fabia{\'n}ska describe an $\Ln_2$-quotient algorithm.
This algorithm, its generalization~\cite{l2qgen}, and the $\Ln_3$-$\Uni_3$-quotient
algorithm \cite{thesis}, provide examples where only the character of a representation
is known, but not the actual representation.
The algorithms take as input a finitely presented group $G$ on two generators and compute all quotients
of $G$ which are isomorphic to $\PSL(2, q)$, $\PSL(3, q)$, or $\PSU(3, q)$.
Instead of constructing the possible representations into $\PSL(2, q)$ or $\PSL(3, q)$,
they construct all possible characters.
One advantage of this approach is that the representations recovered from the characters
are pairwise non-equivalent, whereas that would not necessarily be the case if 
we constructed the representations directly.
Another advantage is that the minimal splitting field of a representation over finite fields is
the field generated by the character values, and the latter can be easily determined from the character.
The characters are constructed 
by translating the group relations into arithmetic conditions for the possible
character values. This yields characters $\chi\colon F_2 \to R$, 
where $F_2$ is the free group on two generators and $R$ is a finitely generated commutative ring.
Taking quotients of $R$ yields characters $\chi_q\colon F_2 \to \F_q$,
and every such character corresponds to a representation $\Delta_q\colon F_2 \to \SL(n, q)$ which
induces a homomorphism $\delta_q\colon G \to \PSL(n, q)$ (with $n = 2$ or $n = 3$,
depending on the algorithm).
To decide whether $\delta_q$ is surjective, we must decide whether the image of $\Delta_q$ lies in one of the
Aschbacher classes.
This can be done using the criteria on~$\chi_q$ described in this paper.
These criteria are independent of the characteristic of the field, so instead of applying the criteria
to every $\chi_q$, they can be applied to $\chi$, thus deciding the membership
simultaneously for every quotient of~$R$.
Using this approach, the algorithms can handle all prime powers~$q$ at once, and determine
all possible values of~$q$ automatically.
The decision whether $\delta_q$ maps onto $\PSU(3,q)$ is similar.

The problem of determining the Aschbacher class of a given matrix group
$G \leq \GL(n, q)$ is central in the matrix group recognition project;
see \cite{niemeyer_praeger,holt_c2,glasby_c5,carlson_c5} 
for algorithms dealing with Aschbacher classes $\mcC_2$, $\mcC_3$, $\mcC_5$, 
and $\mcC_8$.
The results in this paper do not aim to replace any of these algorithms;
while they could be applied to matrix group recognition, the resulting runtime
would certainly be worse than that of the existing algorithms.
The algorithmic value of our results is rather that they can be applied to 
characters, without the knowledge of the full representation, and that they
work for arbitrary fields.
Furthermore, our results have a purely theoretical value, linking Aschbacher 
classes to characters admitting certain stabilizers 
(see~Theorem~\ref{T:stabilizer}).

The results in this paper are based on the following proposition,
which is a special case of \cite[Th\'eor\`eme~1]{carayol} and \cite[Theorem~6.12]{nakamoto}.
We include a short proof for the convenience of the reader.

\begin{prop}
\label{P:absolutely_equivalent}
Let $G$ be a group, $K$ a field, and let $\Delta_i \colon G \to \GL(n, K)$ be representations for $i = 1, 2$ with $\chi_{\Delta_1} = \chi_{\Delta_2}$.
Assume that $\Delta_1$ is absolutely irreducible.
Then $\Delta_1$ and $\Delta_2$ are equivalent; in particular, $\Delta_2$ is absolutely irreducible.
\end{prop}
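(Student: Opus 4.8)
The plan is to pass from representations of $G$ to homomorphisms of the group algebra $KG$ and to exploit the non-degeneracy of the trace form on the full matrix algebra. Extend each $\Delta_i$ to a $K$-algebra homomorphism $\Delta_i\colon KG \to K^{n\times n}$ by $K$-linearity; the hypothesis $\chi_{\Delta_1} = \chi_{\Delta_2}$ then gives $\tr(\Delta_1(a)) = \tr(\Delta_2(a))$ for all $a \in KG$. Since $\Delta_1$ is absolutely irreducible, Burnside's theorem (equivalently, Wedderburn's density theorem) shows that $\Delta_1$ is surjective onto $K^{n\times n}$, so $J := \ker\Delta_1$ satisfies $KG/J \cong K^{n\times n}$.

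The first main step is to compare the two kernels by means of the symmetric bilinear forms $\beta_i\colon KG \times KG \to K$, $\beta_i(a,b) = \tr(\Delta_i(ab)) = \tr(\Delta_i(a)\Delta_i(b))$. Because $\chi_{\Delta_1} = \chi_{\Delta_2}$, these two forms are literally equal, hence have the same radical. For $\beta_1$, surjectivity of $\Delta_1$ together with the non-degeneracy of the trace form $(X,Y) \mapsto \tr(XY)$ on $K^{n\times n}$ identifies $\rad(\beta_1)$ with $J$. On the other hand, $\ker\Delta_2 \subseteq \rad(\beta_2)$ holds trivially. Combining these with $\rad(\beta_1) = \rad(\beta_2)$ yields $\ker\Delta_2 \subseteq J$.

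Next I would upgrade this inclusion to an equality by a dimension count. The inclusion $\ker\Delta_2 \subseteq J$ induces a surjection $KG/\ker\Delta_2 \twoheadrightarrow KG/J \cong K^{n\times n}$, so $\dim_K(KG/\ker\Delta_2) \ge n^2$; but $KG/\ker\Delta_2 \cong \Delta_2(KG) \subseteq K^{n\times n}$ forces $\dim_K(KG/\ker\Delta_2) \le n^2$. Hence $\ker\Delta_2 = J$, and $\Delta_2$ is surjective onto $K^{n\times n}$, which already proves that $\Delta_2$ is absolutely irreducible. Finally, $\Delta_1$ and $\Delta_2$ are two surjections $KG \to K^{n\times n}$ with the same kernel $J$, so they differ by a $K$-algebra automorphism of $K^{n\times n}$; by the Skolem--Noether theorem this automorphism is inner, i.e.\ conjugation by some $T \in \GL(n,K)$, whence $\Delta_2(g) = T\Delta_1(g)T^{-1}$ for all $g \in G$ and the two representations are equivalent.

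I expect the main obstacle to be establishing that the kernels coincide in a way that remains valid over an arbitrary field. The tempting direct argument---showing that $\Delta_2(a)$ is nilpotent for $a \in \ker\Delta_1$ via $\tr(\Delta_2(a)^m) = 0$---breaks down in positive characteristic, where the vanishing of all power traces does not imply nilpotency. Routing the comparison through the radicals of the bilinear forms $\beta_i$ sidesteps this issue and keeps the argument characteristic-free; the remaining ingredients (Burnside's theorem, non-degeneracy of the trace form, and Skolem--Noether) are standard.
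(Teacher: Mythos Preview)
Your proposal is correct and follows essentially the same route as the paper: pass to the group algebra, use the common trace bilinear form and its radical to compare kernels, invoke Burnside's theorem for surjectivity of $\Delta_1$, match dimensions, and finish with Skolem--Noether. The only cosmetic difference is that the paper phrases the kernel comparison via epimorphisms $\varphi_i\colon \Delta_i(KG)\to KG/\rad(\chi)$ and simplicity of $K^{n\times n}$, whereas you compute $\rad(\beta_1)=\ker\Delta_1$ directly from non-degeneracy of the trace form; these are equivalent arguments.
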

\begin{proof}
Denote the representations of the group algebra whose restriction to $G$ is $\Delta_i$ again by~$\Delta_i$.
Let $\chi = \chi_{\Delta_1} = \chi_{\Delta_2}$,
and let $\rad(\chi)$ be the radical of the trace bilinear form 
$KG \times KG \to K\colon (x,y) \mapsto \chi(xy)$.
Then $\ker(\Delta_i) \subseteq \rad(\chi)$, 
so $\varphi_i\colon \Delta_i(KG) \to KG/\rad(\chi)\colon \Delta_i(x) \mapsto x + \rad(\chi)$ are epimorphisms.
But $\Delta_1(KG) = K^{n \times n}$ is simple, so $\varphi_1$ is invertible.
Comparing dimensions we see that $\varphi_2\circ\varphi_1^{-1}\colon K^{n \times n} \to K^{n \times n}\colon \Delta_1(x) \mapsto \Delta_2(x)$
is an automorphism, which
must be inner by the Skolem-Noether Theorem (\cite[Theorem~4.9]{jacobson2}).
\end{proof}

\section{Actions on characters}
\label{S:actions}

\begin{definition}
\label{D:actions} 
Let $\chi$ be the character of a representation $\Delta\colon G \to \GL(n, K)$.
\begin{enumerate}
    \item
    For $\alpha \in \Gal(K)$ define ${}^\alpha\chi$ by $({}^\alpha\chi)(g) := \alpha(\chi(g))$ for $g \in G$.

    \item
    For $\sigma \in \Hom(G, K^*)$ define ${}^\sigma\chi$ by $({}^\sigma\chi)(g) := \sigma(g)\chi(g)$ for $g \in G$.

    \item
    Let $\Cyc_2 = \langle \gamma \rangle$ be a cyclic group of order~2 generated by $\gamma$.
    Define ${}^\gamma\chi$ by $({}^\gamma\chi)(g) := \chi(g^{-1})$ for $g \in G$.
\end{enumerate}
\end{definition}

Clearly ${}^\alpha\chi$, ${}^\sigma\chi$, and ${}^\gamma\chi$ are characters of the representation ${}^\alpha\Delta\colon G \to \GL(n, K)\colon g \mapsto \alpha(\Delta(g))$,
${}^\sigma\Delta\colon G \to \GL(n, K)\colon g \mapsto \sigma(g)\Delta(g)$, and the contragredient representation $\Delta^{-\trs}\colon G \to \GL(n, K)\colon g \mapsto (\Delta(g)^{-1})^{\trs}$, respectively,
so we get actions of $\Gal(K)$, $\Hom(G, K^*)$, and $\langle \gamma \rangle$ on the set of all characters.

Furthermore, for $\alpha \in \Gal(K)$ and $\sigma \in \Hom(G, K^*)$ define ${}^\alpha\sigma \in \Hom(G, K^*)$ by $({}^\alpha\sigma)(g) := \alpha(\sigma(g))$ for all $g \in G$,
and ${}^\gamma\sigma \in \Hom(G, K^*)$ by $({}^\gamma\sigma)(g) := \sigma(g^{-1})$ for all $g \in G$.
This defines a semi-direct product $\Omega(G, K) := (\langle \gamma \rangle \times \Gal(K)) \ltimes \Hom(G, K^*)$,
and it is easy to check that the three actions of Definition~\ref{D:actions} yield an action
of $\Omega(G, K)$ on the set of characters of representations $G \to \GL(n, K)$.

\section{Aschbacher class $\mcC_2$}
\label{S:C2}

In this section, $K$ is an arbitrary field, unless specified otherwise.
Let $\Delta\colon G \to \GL(n, K)$ be an irreducible representation;
denote by $V := K^{n \times 1}$ the induced $KG$-module.
Let $N \tri G$ have finite index; denote by $V_N$ the restricted $KN$-module.
Let $W_1, \dotsc, W_k \leq V_N$ be representatives of the isomorphism classes of simple $KN$-submodules of $V_N$,
and let $V_i$ be the $W_i$-homogeneous component of $V_N$, that is, the sum of all submodules of $V_N$
isomorphic to $W_i$.
By Clifford's Theorem (see for example \cite[Theorem~3.6.2]{lux}), $V_i \cong \bigoplus_{j = 1}^e W_i$ with $e$ independent of $V_i$,
and $V = \bigoplus_{i = 1}^k V_i$; furthermore, $G$ acts transitively on the $V_i$.

If $V$ permits a direct sum decomposition $V = V_1 \oplus \dotsb \oplus V_k$ as vector spaces such that $G$ permutes the $V_i$ transitively,
then $\Delta$ is \textit{imprimitive with blocks} $V_1, \dotsc, V_k$, or $G$ \textit{acts imprimitively on the blocks} $V_1, \dotsc, V_k$.
Define a homomorphism $\psi\colon G \to \Sym_k$ by $gV_i = V_{\psi(g)(i)}$ for $g \in G$ and $i \in \{1, \dotsc, k\}$;
then $\Delta$ \textit{is imprimitive with block action~$\psi$}. The aim of this section is, given $\psi \colon G \to \Sym_k$, 
to provide criteria on $\chi$ to decide whether $\Delta$ is imprimitive with block action~$\psi$.
We start out with general $k$ and $\psi$, but later restrict to the special case that $k$ is prime and $\im\psi$ is solvable.

We have the following necessary condition on~$\chi$.

\begin{lemma}
\label{L:fixedpointfree}
Let $\Delta\colon G \to \GL(n, K)$ be an absolutely irreducible imprimitive representation with blocks $V_1, \dotsc, V_k$,
and let $\chi$ be the character of $\Delta$.
Let $\psi\colon G \to \Sym_k$ be a homomorphism such that $G$ acts on the blocks via~$\psi$.
Then $\chi(g) = 0$ for all $g \in G$ with $\psi(g)$ fixed-point free.
\end{lemma}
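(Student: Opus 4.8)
The plan is to work directly with the matrix of $\Delta(g)$ in a basis adapted to the block decomposition and to read off the trace from its diagonal blocks; absolute irreducibility will not actually be needed for this lemma.

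First I would choose a basis of each $V_i$ and concatenate them into a basis of $V = V_1 \oplus \dots \oplus V_k$. With respect to this basis $\Delta(g)$ decomposes into a $k \times k$ array of blocks, where the $(i,j)$-block $M_{ij}(g)$ is the component of $\Delta(g)$ mapping $V_j$ into $V_i$; writing $\pi_i\colon V \to V_i$ and $\iota_j\colon V_j \hookrightarrow V$ for the projections and inclusions attached to the decomposition, this is $M_{ij}(g) = \pi_i\, \Delta(g)\, \iota_j$.

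The key step is then the observation that, since $G$ permutes the blocks via $\psi$, we have $\Delta(g) V_j = V_{\psi(g)(j)}$, and hence $M_{ij}(g) = 0$ whenever $i \neq \psi(g)(j)$. In particular, the diagonal block $M_{ii}(g)$ can be nonzero only if $\psi(g)(i) = i$, that is, only if $i$ is a fixed point of $\psi(g)$.

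To conclude, I would use that the trace of a block matrix equals the sum of the traces of its diagonal blocks, so that
$$\chi(g) = \tr(\Delta(g)) = \sum_{i=1}^{k} \tr\bigl(M_{ii}(g)\bigr) = \sum_{\psi(g)(i) = i} \tr\bigl(M_{ii}(g)\bigr).$$
When $\psi(g)$ is fixed-point free the final sum ranges over the empty set, and therefore $\chi(g) = 0$. I do not anticipate any genuine obstacle here: the result is a direct consequence of the block-permutation structure, and the only care needed is in the bookkeeping that identifies which blocks survive on the diagonal. The absolute irreducibility assumption is inherited from the standing hypotheses but plays no role in this particular lemma; it will become relevant for the converse direction developed later in the section.
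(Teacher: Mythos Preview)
Your argument is correct and complete. Both you and the paper exploit the same underlying idea---that in a basis adapted to the decomposition $V = V_1 \oplus \dots \oplus V_k$ the matrix of $\Delta(g)$ has nonzero $(i,j)$-block only when $i = \psi(g)(j)$---but the packaging differs. The paper first identifies $V$ with the induced module $KG \otimes_{KT} V_1$ via Clifford's Theorem and then writes $\Delta(g)$ as a Kronecker product $P(h_i)\otimes \Gamma(t)$, reading off the vanishing trace from the permutation factor; you instead go straight to the block matrix and compute $\chi(g) = \sum_i \tr(M_{ii}(g))$ without invoking induction at all. Your route is more elementary and, as you note, does not use absolute irreducibility (or even that the $V_i$ have equal dimension); the paper's formulation, by contrast, sets up the Clifford/induction framework that will be used again later in the section.
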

\begin{proof}
Let $T \leq G$ be the stabilizer of $V_1$. Then $V_1$ is a $KT$-module, and $V \cong KG \otimes_{KT} V_1$ by
Clifford's Theorem. 
Let $\Gamma \colon T \to \GL(n/k, K)$ be a representation of $V_1$, and let $P \colon G \to \GL(k, K)$
be the permutation representation corresponding to~$\psi$.
Let $h_1, \dotsc, h_k$ be representatives of the cosets of $T$ in $G$.
After conjugation, we may assume that the images of $\Delta$ are Kronecker products: 
that is, $\Delta(g) = P(h_i)\otimes \Gamma(t)$, where $g = h_it$ with $t \in T$.
In particular, $\tr(\Delta(g)) = 0$ if $\psi(g) = \psi(h_i)$ is fixed-point free.
\end{proof}

The converse is not true in general, that is,
if $\chi(g) = 0$ for all $g \in G$ with $\psi(g)$ fixed-point free,
then there does not necessarily exist a direct sum decomposition $V = V_1 \oplus \dotsb \oplus V_k$
such that $G$ acts on the blocks via~$\psi$.
For example, let $G = \Alt_5$, and let $\Delta\colon G \to \GL(5, \C)$ 
be the unique absolutely irreducible representation of degree~$5$; let $\psi\colon G \to \Sym_5$ be the embedding. 
Then $\chi(g) = 0$ for every $5$-cycle $g$, but $\Delta$ is primitive.
Our aim is to give some conditions which imply the converse.

The following is a partial converse of Lemma~\ref{L:fixedpointfree}.

\begin{lemma}
\label{L:impred}
Let $\Delta\colon G \to \GL(n, K)$ be an absolutely irreducible representation with character~$\chi$,
and let $\psi\colon G \to \Sym_k$ be a homomorphism with kernel~$N$.
If there exists $g \in G$ such that $\chi(gx) = 0$ for all $x \in N$,
then $\Delta_{|N}$ is not absolutely irreducible.
\end{lemma}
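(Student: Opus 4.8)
The plan is to argue by contradiction, exploiting the non-degeneracy of the trace form on $K^{n\times n}$. Suppose that $\Delta_{|N}$ \emph{is} absolutely irreducible. By Burnside's theorem — the same characterization of absolute irreducibility already used in the proof of Proposition~\ref{P:absolutely_equivalent}, namely that the image of the group algebra fills out the full matrix algebra — the restriction then satisfies $\Delta(KN) = K^{n\times n}$. Equivalently, the matrices $\{\Delta(x) : x \in N\}$ span $K^{n\times n}$ as a $K$-vector space.

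Next I would translate the hypothesis into a statement about traces. By definition of the character, the assumption $\chi(gx) = 0$ for all $x \in N$ reads $\tr(\Delta(g)\Delta(x)) = 0$ for all $x \in N$. Fixing $\Delta(g)$ and regarding $M \mapsto \tr(\Delta(g)M)$ as a $K$-linear functional on $K^{n\times n}$, linearity of the trace forces $\tr(\Delta(g)M) = 0$ for every $M$ in the $K$-span of $\{\Delta(x) : x \in N\}$, which by the previous step is all of $K^{n\times n}$.

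Finally I would invoke the non-degeneracy of the bilinear form $(A, B) \mapsto \tr(AB)$ on $K^{n\times n}$, valid over any field (testing against the standard matrix units $E_{ij}$ recovers the individual entries of $\Delta(g)$): if $\tr(\Delta(g)M) = 0$ for every $M \in K^{n\times n}$, then $\Delta(g) = 0$. This contradicts $\Delta(g) \in \GL(n, K)$, so $\Delta_{|N}$ cannot be absolutely irreducible. There is no serious obstacle here; the argument is a short combination of Burnside's theorem with the non-degeneracy of the trace form. The only point deserving a moment's care is the correct reading of Burnside's theorem over a general, not necessarily algebraically closed, field $K$: it is precisely \emph{absolute} irreducibility (rather than mere irreducibility) that guarantees $\Delta(KN) = K^{n\times n}$ over $K$ itself, and it is this stronger conclusion that the vanishing hypothesis is designed to contradict.
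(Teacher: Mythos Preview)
Your argument is correct and is essentially identical to the paper's own proof: assume $\Delta_{|N}$ is absolutely irreducible so that $\Delta(N)$ spans $K^{n\times n}$, observe that $\chi(gx)=0$ says $\Delta(g)$ is orthogonal to every $\Delta(x)$ under the trace form, and conclude $\Delta(g)=0$ by non-degeneracy. The paper states this in three lines, but the content is the same.
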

\begin{proof}
Suppose $\Delta_{|N}$ is absolutely irreducible, so $\Delta(N)$ contains a basis of~$K^{n \times n}$.
Let $S\colon K^{n \times n} \times K^{n \times n} \to K$ be the trace bilinear form.
Then $S(\Delta(g), \Delta(x)) = \chi(gx) = 0$ for all $x \in N$,
and since $S$ is non-degenerate, this implies $\Delta(g) = 0$, which is impossible.
\end{proof}

We now restrict to the case $k = p$, a prime.

\begin{thm}
\label{T:imprimitive}
Let $K$ be algebraically closed, and let $\Delta\colon G \to \GL(n, K)$ be irreducible with character~$\chi$.
Let $p$ be a prime with $(n, p-1) = 1$, and let $\psi\colon G \to \Sym_p$ be a homomorphism
such that the image is transitive and solvable.
Then $\Delta$ is imprimitive with block action~$\psi$ if and only if $\chi(g) = 0$ for all $g \in G$ with $\psi(g)$ fixed-point free.
\end{thm}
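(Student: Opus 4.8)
The forward implication is exactly Lemma~\ref{L:fixedpointfree}, so the plan is to prove the converse: assuming $\chi(g)=0$ whenever $\psi(g)$ is fixed-point free, I would produce a block system of size~$p$ on which $G$ acts via~$\psi$. Since $\im\psi$ is a transitive solvable subgroup of $\Sym_p$ with $p$ prime, it embeds into $\AGL(1,p)$, hence has a normal regular cyclic subgroup $C\cong\Cyc_p$ with a complement of order $d\mid p-1$. Put $N=\ker\psi$ and $M=\psi^{-1}(C)\tri G$, so that $M/N\cong\Cyc_p$ and $[G:M]=d$. The fixed-point-free elements of $\AGL(1,p)$ are precisely the nontrivial translations, i.e.\ the nonidentity elements of~$C$, so the hypothesis becomes $\chi(g)=0$ for all $g\in M\setminus N$. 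Choosing any $g\in M\setminus N$, we have $gx\in M\setminus N$ and hence $\chi(gx)=0$ for every $x\in N$; Lemma~\ref{L:impred} then shows that $\Delta_{|N}$ is not absolutely irreducible.

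Next I would apply Clifford's Theorem to $N\tri G$, writing $V_{|N}=V_{(1)}\oplus\dots\oplus V_{(r)}$ as a sum of homogeneous components permuted transitively by $G$ through the induced action of $\im\psi$. Thus $r=[\im\psi:\bar T]$ for a point stabilizer $\bar T\le\im\psi$, and $r\mid n$ because $n=r\dim V_{(1)}$. The subgroup structure of $\im\psi\le\AGL(1,p)$ gives a clean dichotomy: if $C\le\bar T$ then $r$ divides $[\im\psi:C]=d\mid p-1$; otherwise $C\cap\bar T=1$, so $|\bar T|\mid d$ and $r=p\cdot(d/|\bar T|)$ with $d/|\bar T|\mid p-1$. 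In either case $r$ has a divisor that divides both $n$ and $p-1$, so $(n,p-1)=1$ forces $r\in\{1,p\}$; and when $r=p$ the stabilizer $\bar T$ is a full complement, so the action of $G$ on the components is the unique faithful transitive degree-$p$ action of $\im\psi$, which is permutation-isomorphic to~$\psi$. Taking the $V_{(i)}$ as blocks then yields imprimitivity with block action~$\psi$.

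It remains to eliminate the case $r=1$, i.e.\ $V_{|N}\cong W^{e}$ with $W$ a $G$-stable simple $KN$-module. As $K$ is algebraically closed, $W$ is absolutely irreducible, so $e=1$ would make $\Delta_{|N}\cong W$ absolutely irreducible, contradicting the consequence of Lemma~\ref{L:impred} drawn above; hence $e\ge 2$. On the other hand, the tensor-decomposition form of Clifford theory identifies $e$ with the degree of an absolutely irreducible projective representation of $\im\psi\cong G/N$ over~$K$. Since $\im\psi\le\AGL(1,p)$ has order $pd$ with $d\mid p-1$, all its Sylow subgroups are cyclic, so its Schur multiplier is trivial and its absolutely irreducible degrees are $1$ and~$d$; in characteristic~$p$ the normal Sylow-$p$ subgroup $C$ acts trivially, forcing every such degree to be~$1$. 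Thus $e\in\{1,d\}$, and since $e\ge 2$ this requires $d\ge 2$ and $e=d$, whence $d\mid n$ and $d\mid p-1$ contradict $(n,p-1)=1$. Therefore $r\ne 1$, leaving $r=p$ and completing the proof.

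The crux is the elimination of $r=1$ in the last paragraph: controlling the multiplicity~$e$ in the homogeneous case. The delicate inputs are the identification of $e$ with a projective degree of $\im\psi$ and the representation theory of this metacyclic group—the triviality of its Schur multiplier (because all Sylow subgroups are cyclic) in the non-defining case, and the collapse to degree~$1$ forced by the normal Sylow-$p$ subgroup in characteristic~$p$. One must verify that these hold uniformly over every field permitted here. By comparison I expect the bookkeeping in the second paragraph—matching the natural degree-$p$ action of $\im\psi$ with the prescribed homomorphism~$\psi$—to be routine.
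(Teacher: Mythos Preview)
Your proof is correct. The main structural difference from the paper is that the paper introduces the intermediate subgroup $C=\psi^{-1}(\F_p)$ (your $M$) and works in two steps: it first shows that $V_C$ is already irreducible---using that $G/C$ is cyclic of order dividing $p-1$ and hence coprime to~$n$, so Clifford forces a single homogeneous component of multiplicity~$1$---and then applies Clifford to $N\tri C$, where $C/N$ is cyclic of prime order~$p$, to obtain exactly~$p$ blocks (the homogeneous case being excluded, again by cyclicity of $C/N$). You instead apply Clifford directly to $N\tri G$, analyse the subgroup lattice of $\im\psi\le\AGL(1,p)$ to force $r\in\{1,p\}$, and then eliminate $r=1$ by computing the absolutely irreducible projective degrees of the metacyclic group $\im\psi$ (trivial Schur multiplier since all Sylow subgroups are cyclic; degrees $1$ or~$d$ in characteristic~$\ne p$, only~$1$ in characteristic~$p$). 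Both routes ultimately rest on the same Clifford-theoretic identification of the multiplicity with a projective degree of the quotient; the paper's use of the intermediate~$C$ is a bit more economical because it only ever needs the cyclic case (quotients $G/C$ and $C/N$), for which the multiplicity-$1$ conclusion is immediate, whereas your direct approach requires the full list of degrees of $\F_p\rtimes H$. On the other hand, your argument makes the role of the hypothesis $(n,p-1)=1$ more transparent, since you invoke it twice in visibly parallel places.
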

\begin{proof}
By Lemma~\ref{L:fixedpointfree} the condition is necessary. We prove that it is sufficient.
Let $V := K^{n \times 1}$ be the $KG$-module induced by $\Delta$, and let $N := \ker\psi$.
By the O'Nan-Scott Theorem (see for example~\cite[Theorem~4.1A]{dixon}),
a transitive subgroup of $\Sym_p$ is either almost simple or isomorphic to a subgroup of $\AGL(1, p)$,
where $\AGL(1, p)$ denotes the one-dimensional affine group over~$\F_p$, acting on~$\F_p$.
We identify $\AGL(1, p)$ with $\F_p \rtimes \F_p^*$. The transitive subgroups of $\AGL(1, p)$
are conjugate to $\F_p \rtimes H$ for $H \leq \F_p^*$, so $G/N \cong \F_p \rtimes H$ for some $H \leq \F_p^*$.
Let $C$ be the preimage of $\F_p$ under~$\psi$. Then $N \tri C \tri G$, and $C/N$ is cyclic of order~$p$.
Note that $V_C$ is irreducible by Clifford's Theorem, since $G/C$ is cyclic and $|G/C|$ is coprime to~$n$.
On the other hand, $V_N$ is not irreducible by Lemma~\ref{L:impred},
thus $V_N = W_1 \oplus \dotsb \oplus V_\ell$. Since $C/N$ is cyclic, the $W_i$ are not all pairwise isomorphic,
so $V_N = V_1 \oplus \dotsb \oplus V_k$ with $k > 1$ in the notation of Clifford's Theorem above.
But $C/N$ is cyclic of order $p$ and acts transitively on the $V_i$, hence $k = p$.
Now $G/N$ acts on the blocks $V_1, \dotsc, V_p$. Since there is only one conjugacy class of subgroups of $G/N$ of index~$p$,
the action of $G/N$ on the blocks is isomorphic to the action induced by~$\psi$.
\end{proof}

The last proof used the fact that $K$ is algebraically closed, and in fact the statement is no longer
true for arbitrary fields; it already fails for a cyclic action.
For example, let $G := \F_{q^2}^* \rtimes \Aut(\F_{q^2}/\F_q)$, and let $\Delta\colon G \to \GL(2, q)$ be an embedding.
Let $\psi \colon G \to \Sym_2$ be the projection onto $\Aut(\F_{q^2}/\F_q)$, so $N := \F_{q^2}^* \tri G$. 
Then $\Delta_{|N}$ is irreducible, but not absolutely irreducible; $\F_{q^2}$ is the smallest splitting field for $\Delta_{|N}$.
Thus $\Delta$ is imprimitive only over the field~$\F_{q^2}$, not over the field~$\F_q$.
However, we can prove a variant of Theorem~\ref{T:imprimitive} over arbitrary fields
with some restrictions on the representation.

\begin{thm}
Let $K$ be an arbitrary field.
Let $p$ be prime and let $\Delta\colon G \to \GL(p, K)$ be an absolutely irreducible representation with character~$\chi$.
Let $\psi\colon G \to \Sym_p$ be a homomorphism such that the image is transitive and solvable, but not cyclic.
Then $\Delta$ is imprimitive with block action~$\psi$ if and only if $\chi(g) = 0$ for all $g \in G$ with $\psi(g)$ fixed-point free.
\end{thm}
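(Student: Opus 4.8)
The plan is to deduce the statement from Theorem~\ref{T:imprimitive} by extending scalars to the algebraic closure and then descending the resulting block system back to $K$ by Galois theory; the extra hypothesis that $\im\psi$ is not cyclic will enter at exactly one point and will be what makes the descent possible. As in Lemma~\ref{L:fixedpointfree} the condition is necessary, so I focus on sufficiency. Assume $\chi(g)=0$ whenever $\psi(g)$ is fixed-point free, and set $N:=\ker\psi$. Since $\im\psi$ is transitive and solvable in $\Sym_p$, the same O'Nan--Scott argument as in the proof of Theorem~\ref{T:imprimitive} gives $\im\psi\cong\F_p\rtimes H$ with $H\leq\F_p^*$; the assumption that $\im\psi$ is not cyclic forces $H\neq 1$, because $\F_p\rtimes H$ is abelian (hence cyclic) only when $H=1$. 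This is the one place the hypothesis is used, and it is precisely what rules out the cyclic counterexample preceding the theorem.

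First I would pass to $\ol{K}$. As $\Delta$ is absolutely irreducible, $\ol{\Delta}$ is irreducible with the same character, and $(p,p-1)=1$, so Theorem~\ref{T:imprimitive} applies and shows $\ol{\Delta}$ is imprimitive with block action $\psi$. Since there are $p$ blocks in a space of dimension $p$, each block is one-dimensional; equivalently, $\ol{\Delta}|_N$ is a direct sum of $p$ distinct linear characters $\lambda_1,\dots,\lambda_p$ of $N$, and $G$ permutes the corresponding weight spaces $\ol{V}_1,\dots,\ol{V}_p$ via $\psi$. In particular every $\Delta(x)$ with $x\in N$ is diagonalisable over $\ol{K}$, so its eigenvalues are separable over $K$; hence the entire weight decomposition is already defined over the separable closure $K_s$, and I may run the descent with $\Gamma:=\Gal(K_s/K)$.

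The heart of the argument is the interaction of two commuting actions on $\Omega:=\{\lambda_1,\dots,\lambda_p\}$: the action of $G$ through $\psi$ (by conjugation on $N$-characters) and the action of $\Gamma$ (on character values). A direct check shows these commute, so the image of $\Gamma$ in $\Sym(\Omega)=\Sym_p$ lies in the centraliser $C_{\Sym_p}(\im\psi)$. I would then compute this centraliser: for a transitive group it is isomorphic to $\Nor_{\im\psi}(L)/L$ for a point stabiliser $L$, and here $L\cong H$ is a nontrivial group of multiplications on $\F_p$ whose only fixed point is $0$; hence $\Nor_{\im\psi}(L)=L$ and the centraliser is trivial. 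This is where $H\neq 1$ is decisive, and it is the step I expect to be the main obstacle to phrase cleanly, since for $H=1$ the regular action is self-centralising, the centraliser is $\F_p$, and the whole argument collapses (matching the counterexample).

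Triviality of the centraliser forces $\Gamma$ to fix each $\lambda_i$, so each weight space $\ol{V}_i$, which is defined over $K_s$, is $\Gamma$-stable. By Galois descent it descends to a one-dimensional $K$-subspace $V_i\leq V$ with $\ol{V}_i=V_i\otimes_K K_s$. Since $\Delta(g)$ has entries in $K$, the relation $g\ol{V}_i=\ol{V}_{\psi(g)(i)}$ descends to $gV_i=V_{\psi(g)(i)}$, so $V=V_1\oplus\dots\oplus V_p$ is a block decomposition over $K$ with block action $\psi$. Thus $\Delta$ is imprimitive with block action $\psi$, completing the sufficiency direction.
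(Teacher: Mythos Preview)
Your argument is correct and is a genuinely different route from the paper's. The paper never invokes Galois descent or the centraliser of $\im\psi$ in $\Sym_p$; instead it works directly with the one--dimensional representation $\Gamma\colon T\to L^*$ of the block stabiliser $T=\Stab_G(V_1)$ obtained from Theorem~\ref{T:imprimitive} over an extension field $L$, and shows by an elementary character computation that $\Gamma$ already takes values in $K^*$. The key points are that $\chi=\Gamma^G$, that $\AGL(1,p)$ is a Frobenius group so a $t\in T\setminus N$ has no $G$-conjugate in $T$ outside its $T$-class, whence the induced-character formula gives $\Gamma(t)=\chi(t)\in K^*$; non-cyclicity enters only to guarantee $T\neq N$ so that such a $t$ exists, and then for every $n\in N$ one has $nt\in T\setminus N$ and $\Gamma(n)=\chi(nt)\chi(t)^{-1}\in K^*$. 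Having $\Gamma$ defined over $K$, the paper induces it over $K$ and matches characters via Proposition~\ref{P:absolutely_equivalent}.

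Your approach trades this hands-on computation for a structural one: the two commuting actions on the weight set force $\Gal(K_s/K)$ into $C_{\Sym_p}(\im\psi)$, and the non-cyclicity hypothesis (equivalently $H\neq 1$) makes that centraliser trivial, so the blocks descend. Both proofs use non-cyclicity at exactly one pivot, and the two pivots are equivalent since $T/N\cong H$. The paper's proof is shorter and more self-contained (no descent machinery, no centraliser formula), while yours makes transparent \emph{why} the cyclic case fails: there the centraliser is the regular $\F_p$, leaving room for a nontrivial Galois action on the blocks, exactly as in the $\F_{q^2}^*\rtimes\Aut(\F_{q^2}/\F_q)$ example.
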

\begin{proof}
We use the notation of the proof of Theorem~\ref{T:imprimitive}.
There exists an extension field $L/K$ such that $\Delta$ is imprimitive over~$L$,
so $L \otimes_K V_N = V_1 \oplus \dotsb \oplus V_p$ for one-dimensional
$LN$-modules $V_1, \dotsc, V_p$.
We show that we can choose $L = K$.
Let $T := \Stab_G(V_1)$, and let $\Gamma\colon T \to L^* = \GL(1, L)$ be the induced representation.
By Clifford's Theorem, $L \otimes_K V \cong (V_1)_T^G = V_1 \otimes_{LT}LG$, so $\chi = \Gamma^G$.
Note that $\AGL(1, p)$ is a Frobenius group, so the intersection of two distinct stabilizers is trivial.
Thus if $t \in T \setminus N$, then $gtg^{-1} \not\in T$ for all $g \in G \setminus T$.
The formula for induced characters shows $\Gamma(t) = \chi(t) \in K^*$ for all $t \in T \setminus N$.
Since $T$ has index $p$ in $G$ and $G/N$ is not cyclic, $T \neq N$.
Fix $t \in T \setminus N$, and let $n \in N$.
Then $nt \in T  \setminus  N$, so $\Gamma(n)\Gamma(t) = \Gamma(nt) = \chi(nt) \in K^*$, hence $\Gamma(n) \in K^*$.
Thus $\Gamma$ is realized over $K$.
Let $V_1'$ be the $KT$-module induced by $\Gamma$; let $\Delta'\colon G \to \GL(p, K)$ be the representation induced by $(V_1')_T^G$ and $\chi'$ the character of $\Delta'$.
Then $\Delta'$ is imprimitive, and $\chi' = \Gamma^G = \chi$, so $\Delta'$ is equivalent to $\Delta$.
\end{proof}

It would be very interesting to find general criteria which work for actions of non-prime degree
or with non-solvable image. However, it seems that other techniques are needed for this.
The problem with the current approach is that we do not have control over the action on the blocks;
in the proof of Theorem~\ref{T:imprimitive} we use that $V_N$ decomposes into $p$ blocks, and there
is only one transitive action of $\AGL(1, p)$ on a $p$-element set, but this is no longer true for
arbitrary groups.

\section{Aschbacher class $\mcC_5$}
\label{S:C5}

The results in this section are only valid for finite fields, so we assume that $K$ is finite.
An absolutely irreducible subgroup $H \leq \GL(n, K)$ 
is in Aschbacher class $\mcC_5$ if it is conjugate to a subgroup
of $\GL(n, F)K^*$ for some subfield $F < K$, where we identify $K^*$ with
the scalar matrices in $\GL(n, K)$.
Let $\Nor_{K/F}\colon K^*\to F^*$ be the norm; then $\ker(\Nor_{K/F})$ is
the set of elements of~$K$ which have norm~$1$ over~$F$.

\begin{thm}
\label{T:PGL}
Let $K/F$ be an extension of finite fields, 
and let $\alpha$ be a generator of the Galois group of $K/F$.
Let $\Delta\colon G \to \GL(n, K)$ be an absolutely irreducible representation with character~$\chi$.
The image of $\Delta$ is conjugate to a subgroup of $\GL(n, F)K^*$ if and only if ${}^\alpha\chi = {}^\sigma\chi$
for some $\sigma \in \Hom(G, \ker(\Nor_{K/F}))$.
\end{thm}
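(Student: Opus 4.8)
My plan is to treat the two implications separately, proving necessity by a direct matrix factorization and sufficiency by a Galois-descent argument built on top of Proposition~\ref{P:absolutely_equivalent}. For necessity I would conjugate so that $\Delta(G)\subseteq\GL(n,F)K^*$ and write $\Delta(g)=\mu_g M_g$ with $\mu_g\in K^*$ a scalar and $M_g\in\GL(n,F)$. The factor $\mu_g$ is only defined modulo $F^*$, but since $\alpha$ fixes $F$ the quantity $\sigma(g):=\alpha(\mu_g)/\mu_g$ is well defined; using $M_gM_hM_{gh}^{-1}\in\GL(n,F)\cap K^*=F^*$ one gets $\mu_{gh}\equiv\mu_g\mu_h\pmod{F^*}$, so $\sigma\in\Hom(G,K^*)$, and Galois-invariance of the norm gives $\Nor_{K/F}(\sigma(g))=1$. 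Since $\tr(M_g)\in F$ I obtain ${}^\alpha\chi(g)=\alpha(\mu_g)\tr(M_g)=\sigma(g)\chi(g)={}^\sigma\chi(g)$, which is the claimed identity.

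For sufficiency I start from ${}^\alpha\chi={}^\sigma\chi$. Then ${}^\alpha\Delta$ and ${}^\sigma\Delta$ have equal characters and ${}^\alpha\Delta$ is absolutely irreducible, so Proposition~\ref{P:absolutely_equivalent} supplies $X\in\GL(n,K)$ with $X\,\alpha(\Delta(g))\,X^{-1}=\sigma(g)\Delta(g)$ for all $g$. I would then iterate this relation $d:=[K:F]$ times; the accumulated scalars collapse into $\Nor_{K/F}(\sigma(g))=1$, so $Y:=X\alpha(X)\cdots\alpha^{d-1}(X)$ centralizes $\Delta(G)$, and absolute irreducibility forces $Y=\lambda I$ by Schur's Lemma. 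The identity $\alpha(Y)=X^{-1}YX$ then gives $\alpha(\lambda)=\lambda$, i.e.\ $\lambda\in F^*$, and since the field norm $K^*\to F^*$ is surjective I can rescale $X$ by a scalar $c$ with $\Nor_{K/F}(c)=\lambda^{-1}$---which does not disturb the conjugation relation---to arrange $Y=I$.

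The condition $Y=I$ is precisely the cocycle condition making $\alpha\mapsto X$ a $1$-cocycle for $\Gal(K/F)$ acting on $\GL(n,K)$, so by the generalized Hilbert Theorem~90 (triviality of $H^1(\Gal(K/F),\GL(n,K))$, valid over any field) there is $Z$ with $X=Z^{-1}\alpha(Z)$. Putting $\Delta''(g):=Z\Delta(g)Z^{-1}$ and substituting collapses everything to the pointwise relation $\alpha(\Delta''(g))=\sigma(g)\Delta''(g)$. At this point I deliberately avoid trying to untwist $\sigma$ by a single homomorphism $G\to K^*$ (such a lift across $K^*\to K^*/F^*$ can be genuinely obstructed); instead I argue entrywise: for nonzero entries $a,b$ of $\Delta''(g)$ the relation gives $\alpha(a)/a=\alpha(b)/b$, hence $\alpha(b/a)=b/a$ and $b/a\in F^*$, so dividing out a fixed nonzero entry exhibits $\Delta''(g)\in K^*\GL(n,F)$. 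Thus $\Delta$, being conjugate to $\Delta''$, has image in a conjugate of $\GL(n,F)K^*$. I expect the descent step to be the main obstacle: it is exactly where both hypotheses on $\sigma$ are used essentially---membership in $\ker(\Nor_{K/F})$ to make $Y$ scalar, and surjectivity of the norm to reduce to an honest coboundary---while the closing entrywise argument is the device that bypasses the potential cohomological obstruction to a global scalar untwisting.
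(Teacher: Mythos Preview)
Your proof is correct and follows essentially the same approach as the paper: both directions match, and for sufficiency you use Proposition~\ref{P:absolutely_equivalent} to get an intertwiner, iterate to see its Galois ``norm'' is scalar, show that scalar lies in $F^*$, rescale via surjectivity of $\Nor_{K/F}$, apply Hilbert~90 for matrices to reduce to ${}^\alpha\Delta={}^\sigma\Delta$, and finally factor each $\Delta(g)$ as scalar times $F$-matrix. The only cosmetic difference is the last step, where the paper invokes scalar Hilbert~90 to produce $\lambda_g$ with $\sigma(g)=\alpha(\lambda_g)\lambda_g^{-1}$, while your entrywise argument amounts to observing that any nonzero entry of $\Delta''(g)$ already furnishes such a $\lambda_g$.
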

\begin{proof}
Assume first that the image is conjugate to a subgroup of $\GL(n, F)K^*$.
We may assume that it is in fact a subgroup of $\GL(n, F)K^*$, since we are 
interested only in the traces.
For every $g \in G$ there exist $\lambda_g \in K^*$ and $X_g \in \GL(n, F)$ with $\Delta(g) = \lambda_g X_g$.
Set $\sigma(g) := \alpha(\lambda_g)\lambda_g^{-1}$. This defines a homomorphism $\sigma\colon G \to \ker(\Nor_{K/F})$
with the desired properties.

Now assume conversely ${}^\alpha\chi = {}^\sigma\chi$.
We show first that we can assume ${}^\alpha\Delta = {}^\sigma\Delta$,
by adapting an argument of \cite{glasby}.
By Proposition~\ref{P:absolutely_equivalent}, ${}^\alpha\Delta$ and ${}^\sigma\Delta$
are equivalent, so $y({}^\sigma\Delta)y^{-1} = {}^\alpha\Delta$ for some $y \in \GL(n, K)$.  
Let $g \in G$. Then
\begin{align*}
    \Delta(g) = \alpha^{\ell-1}(y\sigma(g)\Delta(g)y^{-1}) 
    = \Nor_{K/F}(\sigma(g))\alpha^{\ell-1}(y)\dotsb \alpha(y)y \Delta(g) y^{-1}\alpha(y)^{-1}\dotsb\alpha^{\ell-1}(y)^{-1},
\end{align*}
where $\ell = |\alpha|$.
Since $g$ is arbitrary and $\Delta$ is absolutely irreducible, Schur's Lemma
yields $\alpha^{\ell-1}(y)\dotsb\alpha(y)y = \lambda I_n$ for some $\lambda \in K^*$.
Applying $\alpha$ to this equation and conjugating with~$y^{-1}$, we see that
$\lambda$ is fixed by $\alpha$, hence $\lambda \in F$.
Choose $\eta \in K^*$ with $\Nor_{K/F}(\eta) = \lambda$; 
replacing $y$ by $\eta y \in \GL(n, K)$ 
we may assume that $\alpha^{\ell-1}(y)\dotsb\alpha(y)y = I_n$.
Hilbert's Theorem~90 for matrices applies (see \cite[Proposition~1.3]{glasby}),
so there exists $z \in \GL(n, K)$ with $y = \alpha(z)^{-1}z$.
Since ${}^\alpha({}^z\Delta) = {}^\sigma({}^z\Delta)$, for the
rest of the proof we may assume ${}^\alpha\Delta = {}^\sigma\Delta$ and we will
show that the image of $\Delta$ is a subgroup of $\GL(n, F)K^*$.

Since $\sigma(g)$ has norm~$1$, there exists $\lambda_g \in K^*$ with $\sigma(g) = \alpha(\lambda_g)\lambda_g^{-1}$
by Hilbert's Theorem~90. Set $X_g := \lambda_g^{-1}\Delta(g)$.
Then $\alpha(X_g) = \alpha(\lambda_g)^{-1}\sigma(g)\Delta(g) = X_g$,
so $X_g \in \GL(n, F)$.
\end{proof}

\section{Aschbacher class $\mcC_8$}
\label{S:C8}

We study absolutely irreducible representations preserving a symmetric, alternating or Hermitian form.
If the field $K$ is finite, then these representations lie in Aschbacher class~$\mcC_8$,
but the results are valid for arbitrary fields~$K$.

\begin{prop}
\label{P:orthogonal}
Let $G$ be a group, $K$ a field of characteristic $\neq 2$, and $\Delta\colon G \to \GL(n, K)$
an absolutely irreducible representation with character $\chi$.
Then $\Delta$ fixes a symmetric or alternating form modulo scalars if and only if ${}^{\sigma\gamma}\chi = \chi$
for some $\sigma \in \Hom(G, K^*)$.
The restriction ``modulo scalars'' can be removed if and only if we can choose $\sigma = 1$.
\end{prop}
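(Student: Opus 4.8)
The plan is to convert the geometric statement about invariant forms into a statement about equivalence of representations, and then into the character identity via Proposition~\ref{P:absolutely_equivalent}. A bilinear form on $V = K^{n \times 1}$ with Gram matrix $M$ is fixed by $\Delta$ modulo scalars precisely when there is a function $c\colon G \to K^*$ with $\Delta(g)^{\trs} M \Delta(g) = c(g) M$ for all $g$. First I would rearrange this matrix identity to $M\Delta(g)M^{-1} = c(g)(\Delta(g)^{\trs})^{-1}$, which exhibits an invertible such $M$ as an equivalence between $\Delta$ and the twisted contragredient ${}^\sigma\Delta^{-\trs}$, where $\sigma := c$. Evaluating $\Delta(g)^{\trs}\Delta(h)^{\trs} M \Delta(g)\Delta(h)$ two ways shows $c(gh) = c(g)c(h)$, so $\sigma \in \Hom(G, K^*)$. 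Since ${}^\sigma\Delta^{-\trs}$ has character ${}^{\sigma\gamma}\chi$, Proposition~\ref{P:absolutely_equivalent} turns the existence of such an equivalence into the identity ${}^{\sigma\gamma}\chi = \chi$. This gives the easy direction at once: an invariant form modulo scalars produces the required~$\sigma$.

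For the converse, suppose ${}^{\sigma\gamma}\chi = \chi$ for some $\sigma$. By Proposition~\ref{P:absolutely_equivalent} the representations $\Delta$ and ${}^\sigma\Delta^{-\trs}$ are equivalent, so there is an invertible $M$ with $\Delta(g)^{\trs}M\Delta(g) = \sigma(g)M$ for all $g$; this is an invariant form modulo scalars, but a priori neither symmetric nor alternating. The crux is to show $M$ can be chosen symmetric or skew-symmetric. Transposing the defining identity gives $\Delta(g)^{\trs}M^{\trs}\Delta(g) = \sigma(g)M^{\trs}$, so $M^{\trs}$ is another solution of the same linear system. Now any two solutions $M_1, M_2$ with $M_1$ invertible satisfy $M_1^{-1}M_2 \in \End_{KG}(V)$, and since $\Delta$ is absolutely irreducible the enveloping algebra is all of $K^{n \times n}$, whence $\End_{KG}(V) = K$; thus every solution is a scalar multiple of $M$ and the solution space is one-dimensional. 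Hence $M^{\trs} = \veps M$ for some scalar $\veps$, and transposing once more forces $\veps^2 = 1$, so $\veps = \pm 1$. Therefore $M$ is symmetric or skew-symmetric; as $\car K \neq 2$, skew-symmetric coincides with alternating, and we obtain the desired symmetric or alternating semi-invariant form.

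The final clause follows by specializing the multiplier. If $\Delta$ fixes a form (not merely modulo scalars), then $c(g) = 1$ for all $g$, so the construction above yields $\sigma = 1$ and ${}^\gamma\chi = \chi$. Conversely, if ${}^\gamma\chi = \chi$, then running the converse argument with $\sigma = 1$ produces an invertible symmetric or skew-symmetric $M$ with $\Delta(g)^{\trs}M\Delta(g) = M$, an honest invariant form. I expect the main obstacle to be the symmetrization step of the second paragraph: one must recognize the space of semi-invariant forms as the solution space of an intertwining condition so that absolute irreducibility (via $\End_{KG}(V) = K$) forces one-dimensionality, after which the transpose trick pins the sign down to $\veps = \pm 1$. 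The hypothesis $\car K \neq 2$ enters exactly here, both to keep the case $\veps = -1$ genuinely distinct from $\veps = 1$ and to identify skew-symmetric forms with alternating ones.
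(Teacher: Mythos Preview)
Your proof is correct and follows essentially the same route as the paper: both directions hinge on recasting the (semi-)invariant form condition as an equivalence $M\Delta M^{-1} = {}^\sigma\Delta^{-\trs}$, invoking Proposition~\ref{P:absolutely_equivalent} for the converse, and then using Schur's Lemma to force $M^{\trs} = \pm M$. Your forward direction is in fact slightly cleaner than the paper's, which passes to $\ol K$ to factor $y = z^{\trs}z$, whereas you simply take traces of the conjugation identity directly.
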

\begin{proof}
Assume first that $\Delta$ preserves a symmetric or alternating form modulo scalars.
That is, there exists a symmetric or skew-symmetric matrix $y \in K^{n \times n}$
such that $\Delta(g)^{\trs}y\Delta(g) = \sigma(g)y$ for all $g \in G$, where $\sigma(g) \in K^*$.
It is easy to verify that $\sigma\colon G \to K^*\colon g \mapsto \sigma(g)$ defines a homomorphism.
Let $z \in \ol{K}^{n \times n}$ such that $z^{\trs}z = y$, where $\ol{K}$ is an algebraic closure of~$K$.
Then $z\Delta(g)z^{-1} = \sigma(g)(z\Delta(g)z^{-1})^{-\trs}$,
so $\chi(g) = \tr(z\Delta(g)z^{-1}) = \sigma(g)\tr((z\Delta(g)z^{-1})^{-\trs}) = {}^{\sigma\gamma}\chi(g)$.

Now assume that ${}^{\sigma\gamma}\chi = {}^\chi$.
The representations $\Delta$ and ${}^\sigma\Delta^{-{\trs}} = (g \mapsto \sigma(g)\Delta(g)^{-{\trs}})$
are absolutely irreducible with the same traces, so by Proposition~\ref{P:absolutely_equivalent} they are equivalent.
Let $y \in \GL(n, K)$ such that $y\Delta(g)y^{-1} = \sigma(g)\Delta(g)^{-\trs}$
for all $g \in G$.
Then
$\Delta(g) = (\Delta(g)^{-\trs})^{-\trs} = y^{-\trs}y\Delta(g)y^{-1}y^{\trs}$
for all $g \in G$, so $y^{-\trs}y$ lies in the centralizer of $\Delta$ by Schur's Lemma.
Hence $y^{-\trs}y = \lambda I_n$ for some $\lambda \in K$.
But $y^{\trs} = \lambda y = \lambda^2y^{\trs}$, 
so either $\lambda = 1$, in which case $y$ is symmetric,
or $\lambda = -1$, in which case $y$ is skew-symmetric.
\end{proof}

\begin{prop}
\label{P:unitary}
Let $G$ be a group and $K$ a field which has an automorphism $\alpha$ of order~$2$;
let $\Delta\colon G \to \GL(n, K)$ be an absolutely irreducible representation 
with character $\chi$.
Then $\Delta$ fixes an $\alpha$-Hermitian form modulo scalars if and only if ${}^{\sigma\alpha\gamma}\chi = \chi$
for some $\sigma \in \Hom(G, K^*)$.
The restriction ``modulo scalars'' can be removed if and only if we can choose $\sigma = 1$.
\end{prop}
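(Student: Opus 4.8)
The plan is to mirror the proof of Proposition~\ref{P:orthogonal}, replacing the transpose $\trs$ by the conjugate-transpose $M^{*} := \alpha(M)^{\trs}$. This is an involutory anti-automorphism of $K^{n\times n}$ with $(cM)^{*}=\alpha(c)M^{*}$ for $c\in K$, and an $\alpha$-Hermitian form on $V=K^{n\times 1}$ has a Gram matrix $y$ with $y^{*}=y$. Thus $\Delta$ fixes such a form modulo scalars precisely when $\Delta(g)^{*}y\,\Delta(g)=\sigma(g)y$ for all $g$ and some $\sigma\colon G\to K^{*}$, and a routine check shows this $\sigma$ is a homomorphism. For the forward direction I would rearrange this identity to $\Delta(g)=\sigma(g)\,y^{-1}(\Delta(g)^{*})^{-1}y$ and take traces; since $\tr((\Delta(g)^{*})^{-1})=\alpha(\tr(\Delta(g)^{-1}))=\alpha(\chi(g^{-1}))$, this gives $\chi(g)=\sigma(g)\alpha(\chi(g^{-1}))={}^{\sigma\alpha\gamma}\chi(g)$, as required. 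This direction uses only invertibility of $y$, not $y^{*}=y$, and if the form is preserved on the nose then $\sigma=1$.

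For the converse, note that ${}^{\sigma\alpha\gamma}\chi$ is the character of the absolutely irreducible representation $\Phi\colon g\mapsto\sigma(g)(\Delta(g)^{*})^{-1}=\sigma(g)\alpha(\Delta(g))^{-\trs}$. From ${}^{\sigma\alpha\gamma}\chi=\chi$ and Proposition~\ref{P:absolutely_equivalent}, the representations $\Delta$ and $\Phi$ are equivalent, so there is $y\in\GL(n,K)$ with $y\Delta(g)y^{-1}=\sigma(g)(\Delta(g)^{*})^{-1}$, which rearranges to $\Delta(g)^{*}y\,\Delta(g)=\sigma(g)y$. Hence $\Delta$ already preserves the sesquilinear form with Gram matrix $y$ modulo scalars, and the only remaining task is to arrange $y^{*}=y$.

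This last step is the main obstacle, and it is genuinely more delicate than in Proposition~\ref{P:orthogonal}. Applying $*$ to $\Delta(g)^{*}y\,\Delta(g)=\sigma(g)y$ gives $\Delta(g)^{*}y^{*}\Delta(g)=\alpha(\sigma(g))\,y^{*}$: unlike the transpose, $*$ scales the multiplier by $\alpha$, so a priori $y$ and $y^{*}$ intertwine $\Delta$ with two different representations and Schur's Lemma does not immediately yield $y^{*}=\mu y$. To repair this I would first extract from ${}^{\sigma\alpha\gamma}\chi=\chi$ the identities $\chi(g)=\sigma(g)\alpha(\chi(g^{-1}))$ and, after $g\mapsto g^{-1}$ and applying $\alpha$, the identity $\chi(g)=\alpha(\sigma(g))\alpha(\chi(g^{-1}))$; subtracting gives $(\sigma(g)-\alpha(\sigma(g)))\,\alpha(\chi(g^{-1}))=0$, so $\sigma(g)$ lies in the fixed field $F$ of $\alpha$ whenever $\chi(g^{-1})\neq 0$. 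The crux is then to promote this to $\alpha$-invariance of $\sigma$ on all of $G$ (equivalently, to rule out the nontrivial twist $\tau=\alpha(\sigma)\sigma^{-1}$ with $\Delta\cong{}^{\tau^{-1}}\Delta$): granting this, $y$ and $y^{*}$ intertwine $\Delta$ with the same representation, so $w:=(y^{*})^{-1}y$ is scalar by Schur's Lemma and $y^{*}=\mu y$ for some $\mu\in K^{*}$. Finally $(y^{*})^{*}=y$ forces $\mu\,\alpha(\mu)=1$, so by Hilbert's Theorem~90 we may write $\mu=\alpha(c)/c$ and replace $y$ by the Hermitian matrix $\alpha(c)\,y$, which still satisfies the intertwining identity; with $y$ Hermitian, taking $*$ then forces $\sigma(g)=\alpha(\sigma(g))$, confirming consistency and, when $\sigma=1$, yielding a form preserved exactly. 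I expect the passage from ``$\sigma\in F$ on the locus $\chi\neq 0$'' to genuine $\alpha$-invariance of $\sigma$ (i.e.\ controlling the behaviour on the vanishing set of $\chi$, where $\Delta$ may be imprimitive) to be the technically hardest point.
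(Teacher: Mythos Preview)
Your approach coincides with the paper's: obtain an intertwiner $y$ from Proposition~\ref{P:absolutely_equivalent}, argue that $y^{*}=\lambda y$ for some scalar $\lambda$ of norm~$1$, and then rescale via Hilbert's Theorem~90 to make $y$ Hermitian. The forward direction and the final Hilbert~90 step match the paper.

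Where you diverge is exactly the step you flag as the obstacle, and you have in fact been more careful than the paper here. The paper's entire justification for $y=\lambda\,\alpha(y)^{\trs}$ is the phrase ``as in Proposition~\ref{P:orthogonal}''. But the analogy is not immediate: carrying the Schur computation over verbatim shows only that $w:=(y^{*})^{-1}y$ satisfies $w\,\Delta(g)\,w^{-1}=\tau(g)\Delta(g)$ with $\tau=\sigma\,\alpha(\sigma)^{-1}$, so $w$ is scalar only when $\sigma$ is $\alpha$-invariant. The paper does not address this; it simply asserts the conclusion and proceeds to Hilbert~90. Thus the subtlety you isolate is genuine and is glossed over in the published proof as well; your proposed repair---replacing $\sigma$ by an $\alpha$-invariant $\sigma'$ in the same $\Hom(G,K^{*})$-coset determined by $\chi$---is the natural one, and you are right that it is not automatic. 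It is worth noting, however, that in the paper's sole application (Theorem~\ref{T:stabilizer}, case~(4)) the stabilising element $\sigma\alpha\gamma$ is taken of prime order, hence of order~$2$; since $(\sigma\alpha\gamma)^{2}=\sigma\,\alpha(\sigma)^{-1}$ in $\Omega(G,K)$, this forces $\alpha(\sigma)=\sigma$ outright, and there the difficulty evaporates.
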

\begin{proof}
Again, only the `if' part is non-trivial.
As in Proposition~\ref{P:orthogonal}, there exists $y \in \GL(n, K)$ with $y\Delta(g)y^{-1} = \sigma(g)\alpha(\Delta(g)^{-\trs})$
for all $g \in G$, and $y = \lambda \alpha(y)^{\trs}$ for some $\lambda \in K$.
Applying $\alpha$ to this last equation and transposing gives $\alpha(\lambda)^{-1} = \lambda$,
so $\lambda$ has norm~1 over the fixed field.
By Hilbert's Theorem~90, there exists $\mu \in K$ with
$\lambda = \alpha(\mu)/\mu$,
and replacing $y$ by $\mu y$ we may assume that $y$ is Hermitian.
\end{proof}

\section{Actions on characters revisited}

Let $\Omega(G, K) = (\langle \gamma \rangle \times \Gal(K)) \ltimes \Hom(G, K^*)$ be the group defined in Section~\ref{S:actions}, acting on the set of characters
of representations $G \to \GL(n, K)$.
The results in this paper show that characters with non-trivial stabilizers usually come from special types
of representations. The most precise statement can be made if $K$ is finite.
Let $\pi_i$ be the projection onto the $i$th factor of $\Omega(G, K)$, for $i = 1,2,3$.

\begin{thm}
\label{T:stabilizer}
Let $K$ be a finite field and $\chi$ the character of an absolutely irreducible representation $\Delta\colon G \to \GL(n, K)$.
Assume that $\chi$ has a non-trivial stabilizer; let $\rho \in \Stab(\chi)$ be an element of prime order.
\begin{enumerate}
    \item
    If $\pi_1(\rho) = 1$ and $\pi_2(\rho) = 1$, then $\Delta$ is imprimitive over an extension field of $K$, with cyclic block action.

    \item
    If $\pi_1(\rho) = 1$ and $\pi_2(\rho) \neq 1$, then $\Delta$ is realizable modulo scalars over a proper subfield of~$K$.

    \item
    If $\pi_1(\rho) \neq 1$ and $\pi_2(\rho) = 1$, and if $\car(K) \neq 2$, then $\Delta$ fixes an alternating or symmetric form modulo scalars.

    \item
    If $\pi_1(\rho) \neq 1$ and $\pi_2(\rho) \neq 1$, then $\Delta$ fixes a Hermitian form modulo scalars.
\end{enumerate}
The restriction ``modulo scalars'' in the last three statements can be removed if $\pi_3(\rho) = 1$.
\end{thm}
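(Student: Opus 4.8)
The plan is to read off the structure of $\rho$ from the semidirect-product decomposition $\Omega(G,K) = (\langle\gamma\rangle \times \Gal(K)) \ltimes \Hom(G, K^*)$ together with the hypothesis that $\rho$ has prime order, and then to match each of the four cases with a result already established. Write $\rho = \beta\sigma$, where $\beta \in \langle\gamma\rangle\times\Gal(K)$ is the image of $\rho$ and $\sigma = \pi_3(\rho) \in \Hom(G,K^*)$. Since $\pi_1$ and $\pi_2$ are homomorphisms while $\rho$ has prime order, the order is forced: if $\pi_1(\rho) = \gamma \neq 1$ then $|\rho| = 2$ and $\pi_2(\rho)$ has order $1$ or $2$; if $\pi_1(\rho) = 1$ but $\alpha := \pi_2(\rho) \neq 1$, then $|\rho|$ equals the prime order of $\alpha$ in the cyclic group $\Gal(K)$. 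In each case $\rho \in \Stab(\chi)$, i.e.\ ${}^\rho\chi = \chi$, unwinds via Definition~\ref{D:actions} into a relation of exactly the form treated earlier.

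Statements (3) and (4) are then immediate. When $\pi_1(\rho) = \gamma$ and $\pi_2(\rho) = 1$, the relation is ${}^{\sigma\gamma}\chi = \chi$, and Proposition~\ref{P:orthogonal} (applicable since $\car K \neq 2$) gives a preserved symmetric or alternating form modulo scalars. When $\pi_1(\rho) = \gamma$ and $\pi_2(\rho) = \alpha \neq 1$, prime order forces $|\alpha| = 2$, so $K$ admits an automorphism of order~$2$ and the relation is ${}^{\sigma\alpha\gamma}\chi = \chi$; Proposition~\ref{P:unitary} then supplies a preserved $\alpha$-Hermitian form modulo scalars.

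For statement (2), $\pi_1(\rho) = 1$ and $\alpha := \pi_2(\rho) \neq 1$ has prime order $r = |\rho|$; let $F$ be the fixed field of $\alpha$, so that $K/F$ is cyclic of degree $r$ and $\alpha$ generates $\Gal(K/F)$. The one step here that is more than a translation is to turn $\rho^r = 1$ into the correct constraint on $\sigma$: evaluating the $\Hom(G,K^*)$-component of $\rho^r$ with the help of ${}^\alpha\sigma(g) = \alpha(\sigma(g))$, the product telescopes to $g \mapsto \prod_{j=0}^{r-1}\alpha^j(\sigma(g)) = \Nor_{K/F}(\sigma(g))$, so $\rho^r = 1$ forces $\sigma \in \Hom(G, \ker(\Nor_{K/F}))$. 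The relation ${}^\rho\chi = \chi$ then reads ${}^\alpha\chi = {}^\tau\chi$ for a twist $\tau$ obtained from $\sigma$, again norm-one since $\ker(\Nor_{K/F})$ is a $\Gal(K)$-stable subgroup; this is precisely the hypothesis of Theorem~\ref{T:PGL}, which places the image in $\GL(n,F)K^*$, i.e.\ realizes $\Delta$ modulo scalars over the proper subfield $F$.

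Statement (1) is the case $\pi_1(\rho) = \pi_2(\rho) = 1$, so $\rho = \sigma \in \Hom(G,K^*)$ has prime order $p$, and ${}^\sigma\chi = \chi$ forces $\chi(g) = 0$ whenever $\sigma(g) \neq 1$. Put $N = \ker\sigma$; then $G/N \cong \Cyc_p$, a finite subgroup of $K^*$. For any $g \notin N$ and any $x \in N$ we have $gx \notin N$, hence $\chi(gx) = 0$, so Lemma~\ref{L:impred} shows $\Delta_{|N}$ is not absolutely irreducible. Passing to a finite splitting field $L \supseteq K$ for $\Delta_{|N}$ and applying Clifford's Theorem, the reducibility of $V_N$ combined with $G/N$ being cyclic forces more than one homogeneous component (as in the proof of Theorem~\ref{T:imprimitive}), and transitivity of $\Cyc_p$ then yields exactly $p$ blocks permuted cyclically --- that is, imprimitivity over $L$ with cyclic block action. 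Finally, the addendum holds because $\pi_3(\rho) = 1$ means $\sigma = 1$: in (3) and (4) this is exactly the situation in which Propositions~\ref{P:orthogonal} and~\ref{P:unitary} allow ``modulo scalars'' to be dropped, while in (2) the reduction inside the proof of Theorem~\ref{T:PGL} gives ${}^\alpha\Delta = \Delta$ up to conjugacy, so every $\Delta(g)$ has entries in $F$ and $\Delta$ is genuinely realizable over $F$. The step I expect to be the main obstacle is the bookkeeping in case~(2) --- correctly identifying the $\Hom(G,K^*)$-component of $\rho^r$ and recognizing it as the norm; the other three cases reduce to the earlier results once the prime-order constraints on $\beta$ and $\sigma$ have been pinned down.
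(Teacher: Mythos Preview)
Your proof is correct and follows essentially the same approach as the paper's: case-splitting on the projections of $\rho$, reading off the norm constraint on $\sigma$ from $\rho^r = 1$ in case~(2), and invoking the earlier results (Theorem~\ref{T:imprimitive}, Theorem~\ref{T:PGL}, Propositions~\ref{P:orthogonal} and~\ref{P:unitary}) in the four cases. The only minor deviations are that in case~(1) you unpack the Clifford-theoretic argument directly via Lemma~\ref{L:impred} rather than citing Theorem~\ref{T:imprimitive}, and for the $\pi_3(\rho)=1$ addendum in case~(2) you appeal to the internal reduction in the proof of Theorem~\ref{T:PGL} where the paper instead invokes Wedderburn's theorem on realizability over the character field.
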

\begin{proof}
Let $p = |\rho|$.

Assume first $\pi_1(\rho) = 1$ and $\pi_2(\rho) = 1$, so $\rho = \sigma \in \Hom(G, K^*)$.
Then $\sigma \in \Hom(G, \langle \zeta \rangle)$, where $\zeta \in K$ is a primitive $p$th root of unity.
Identifying $\zeta$ with a $p$-cycle in $\Sym_p$ we may regard $\sigma$ as a homomorphism $\psi\colon G \to \Sym_p$.
Since $\chi(g) = {}^\sigma\chi(g) = \sigma(g)\chi(g)$ for all $g \in G$, we see $\chi(g) = 0$ whenever $\psi(g) = \sigma(g) \neq 1$,
that is, $\psi(g)$ is fixed-point free. Thus the first claim follows by Theorem~\ref{T:imprimitive}.

Now assume $\pi_1(\rho) = 1$ and $\pi_2(\rho) \neq 1$. If $\pi_3(\rho) = 1$, then $\rho = \alpha \in \Gal(K)$,
so all character values lie in the fixed field $F$ of~$\alpha$.
By Wedderburn's Theorem, a representation over a finite field is realizable over its character field,
so the claim follows for $\pi_3(\rho) = 1$.
If $\pi_3(\rho) \neq 1$, then $\rho = (\alpha, \sigma)$ for $\alpha \in \Gal(K)$ and $\sigma \in \Hom(G, K^*)$.
But $(1,1) = (\alpha, \sigma)^p = (\alpha^p, \sigma\cdot {}^\alpha\sigma\dotsb {}^{\alpha^{p-1}}\sigma)$,
so $\sigma \in \Hom(G, \ker(\Nor_{K/F}))$, where $F$ is the fixed field of $\alpha$.
The second claim now follows by Theorem~\ref{T:PGL}.

The last two claims are reformulations of Propositions~\ref{P:orthogonal} and~\ref{P:unitary}.
\end{proof}

\section{Acknowledgments}

This paper is part of my PhD thesis, written under the supervision of Wilhelm Plesken.
I am very grateful for his constant advice and support. 
I am also very grateful to Ben Martin and Eamonn O'Brien for helpful comments on early versions of this paper.
\bibliography{paper}

\noindent
Department of Mathematics \\
The University of Auckland \\
Private Bag 92019 \\
Auckland \\
New Zealand \\
E-mail address: \texttt{s.jambor@auckland.ac.nz}
\end{document}